\begin{document}
\title{Crossing-Preserving Geodesic Tracking on Spherical Images}
%
%

\author{
Nicky J. van den Berg\inst{1}\thanks{Joint main authors.}
\and
Finn M. Sherry\inst{1,2}\repeatthanks
\and
Tos T.J.M. Berendschot\inst{3}
\and 
Remco Duits\inst{1,2}
}
\authorrunning{N.J. van den Berg and F.M. Sherry et al.}
\institute{CASA, Dep. of Mathematics \& Computer Science, Eindhoven University of Technology,  the Netherlands\\
\email{\{n.j.v.d.berg,f.m.sherry,r.duits\}@tue.nl}
\and
EAISI, Eindhoven University of Technology, the Netherlands
\and
Ophthalmology Department Maastricht University, the Netherlands\\
\email{t.berendschot@maastrichtuniversity.nl}
}

\maketitle              
\begin{abstract}
%
In image analysis one often encounters spherical images, for instance in retinal imaging. 
The behavior of the vessels in the retina is an indicator of several diseases.
To automate disease diagnosis using retinal images,  it is necessary to develop an algorithm that automatically identifies and tracks vessels. 
To deal with crossings due to projected blood vessels in the image it is common to lift retinal images to the space of \emph{planar} positions and orientations $\M \coloneqq \mathbb{R}^2 \times S^1$. This implicitly assumes that the flat image accurately represents the geometry of the retina.
As the eyeball is a sphere (and not a plane), we propose to compute the \emph{cusp-free, crossing-preserving} geodesics in the space of \emph{spherical} positions and orientations $\tildeM$ on wide-field images. We clarify how to relate both manifolds and compare the calculated geodesics. The results show clear advantages of crossing-preserving tracking in $\tildeM$ over non-crossing-preserving tracking in $\tildeM$ and are comparable to tracking results in $\M$. 


\keywords{Geodesic Tracking  \and Vascular Tree Tracking \and Spherical Images \and Retinal Imaging \and Sub-Riemannian Geometry.}
\end{abstract}
\section{Introduction}
The eye offers a window to a person's health. Since the vessels on the retina behave similarly to the vessels throughout the rest of the body, taking pictures of the retina allows for a noninvasive way to diagnose and monitor several diseases, including diabetes, hypertension, and Alzheimer's disease \cite{colligris2018ocular,sasongko2016retinal,weiler2015arteriole}. 
Automatic vessel tracking algorithms help the efficient diagnosis of these diseases; we consider geodesic tracking methods which calculate the shortest path, following the biological structure, between two points on the same vessel.

There has been a lot of research on automatic tracking of blood vessels on retinal images \cite{Liu2021tubular,benmansour,bekkersPhD}. These algorithms often lift the image from $\Rtwo$ to 
the space of planar positions and orientations $\M \coloneqq \Rtwo \times S^1$. The lifting step disentangles difficult crossing structures, resulting in better tracking results. Recent works show that 
data-driven sub-Riemannian (SR)
metric tensor fields on $\M$ result in good tracking results for entire vascular trees \cite{berg2024geodesic}.

These existing models implicitly assume that the retinal image accurately captures the geometry of the retina. However, since the image is flat while the retina is spherical, the distances in the projection deviate from the actual distances. 
This deviation influences the tracking results. 
This concern was first put forward 
by Mashtakov et al. \cite{mashtakov2017tracking}: 
processing spherical images requires 
data-driven versions of 
SR \cite{boscainSO3,beshastniy,Beretovskii2016SubRiemannianDistance} geodesics 
on the space of \emph{spherical} positions and orientations $\tildeM \equiv \SO(3)$ instead of on $\M$. 
The SR geometry can naturally be visualised (as pointed out by Boscain \& Rossi \cite{boscainSO3}) with
a Reeds-Shepp car traveling over the blood vessels on the retinal sphere, cf.~Fig.~\ref{fig:Intro}, where the car can only move forward and backward and turn the wheel with finite costs, while sideward motions have infinite cost. 

A limitation of SR geometry for vascular tracking is the presence of cusps, which do not occur in blood vessels \cite{boscainSO3}, \cite[Fig.~8]{mashtakov2017tracking}. This was solved in \cite{duits2018optimal} by introducing the sub-Finslerian (SF) forward gear Reeds-Shepp car model. Since the car in Fig.~\ref{fig:Intro} now also cannot move backward, it cannot create cusps.

\begin{figure}
    \centering
        \begin{tikzpicture}[scale=1]
            \node[] (pic) at (0,0) {\includegraphics[width=0.8\textwidth]{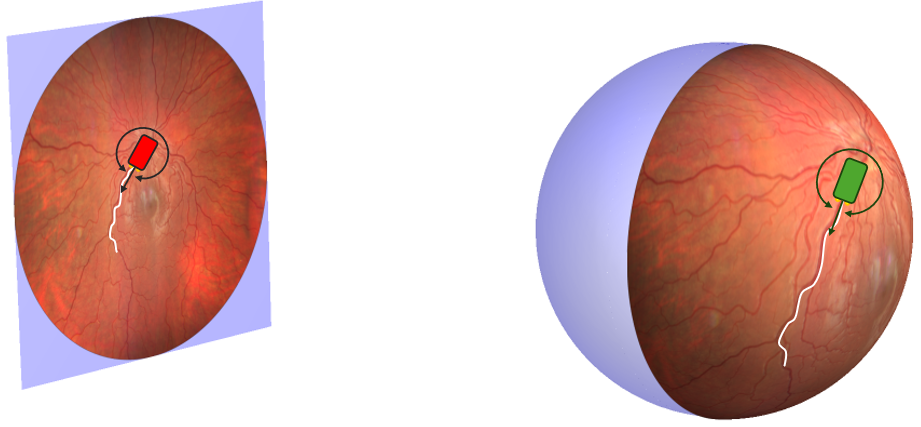}};
            \node[] at (-2.35,2) {$\M$};
            \node[] at (3,2) {$\tildeM$};
            \draw[stealth-] (-1.5,0.3) -- (0.5,0.3) node[midway,above] {$\Pi$};
        \end{tikzpicture}
    \vspace{-0.2cm}
\caption{Visualization of a tracking result of a blood vessel (white) on the plane (left) and on the sphere (right). We also visualize cars (red on the plane, green on the sphere) that can only move forwards and change orientation.
This gives intuition behind existing models on $\M \equiv \SE(2)$ \cite{reeds1990optimal,duits2018optimal} (red car), and our proposed, and previous \cite{mashtakov2017tracking} models, 
on $\tildeM \equiv \SO(3)$ (green car). We show that the projection map $\Pi$ is uniquely determined, cf.~Thm.~\ref{thm:pi_map}.}
\label{fig:Intro}
\end{figure}

In geodesic vessel tracking algorithms \cite{caselles1997geodesic,Liu2021tubular,benmansour,bekkersPhD,mashtakov2017tracking,Chen2014VesselExtraction} one first computes a distance map as the viscosity solution of an eikonal PDE system, and 
then computes the \emph{globally optimal} geodesics by applying a 
steepest descent on the distance map. We will follow this approach and modify these existing geometric models, regarding the base manifold, model, and algorithm as we explain next in detail. \\[6pt]    
\textbf{Our Contributions. }
We propose a new data-driven extension of \cite{boscainSO3,beshastniy,Beretovskii2016SubRiemannianDistance} of the SR geodesics model on $\tildeM$, and
practically improve the model and algorithm of Mashtakov et al. \cite{mashtakov2017tracking}. More specifically, we do this by:
\begin{enumerate}
\item 
creating a \emph{crossing-preserving} tracking algorithm on $S^2$ using a 
cost function defined on the space of spherical positions and orientations $\tildeM$. \\[4pt]
We show examples where our new geodesic tracking method no longer takes wrong exits at crossings due to our crossing-preserving cost function, which is new compared to \cite{mashtakov2017tracking}.
\\[-6pt]
\item 
including a forward gear constraint, which is SF as opposed to SR, to get \emph{cusp-free} spherically projected $\tildeM$-geodesics. \\[4pt]
Such cusps \cite{boscainSO3}, \cite[Fig.~8]{mashtakov2017tracking} are cumbersome in our specific application. We provide a new spherical version of the forward gear Reeds-Shepp car \cite{duits2018optimal}. 
\\[-6pt]
\item 
applying the method to wide-field images \cite{Maastrichtstudy} instead of standard images. \\[4pt] 
They cover $120^{\circ}$ of the eyeball with 17~\!\% deformations, so that the difference between the $\tildeM$-model and its $\M$ approximation is much larger than on regular optical images (covering $72^\circ$ of the eye with 5~\!\% deformations \cite{mashtakov2017tracking}).
\\[-6pt]
\item computing the SR/SF distance maps with fast, simple, accurate GPU-code. \\[4pt]
We transfer the PDE-approach in \cite{bekkers2015datadriven} from the SR car model on $\SE(2) \equiv \M$ to the SF forward gear car model on $\tildeM$. 
Where advanced anisotropic fast-marching \cite{mirebeau2019fastmarching,mirebeau2022parallelfm} can approximate SR and SF constraints \cite{duits2018optimal}, this PDE-approach allows for \emph{exact} constraints.
The Python code for the experiments is released with this article. 
The PDE solver is accelerated by GPU parallelisation using the Python extension Taichi \cite{Taichi}.
\end{enumerate}
With these adaptations we can track vascular trees on wide-field images, with all SR or SF geodesics computed by steepest descent on a single distance map.

\section{Space of Planar Positions \& Orientations\texorpdfstring{ $\M$}{}}
In many existing methods \cite{bekkers2015datadriven,berg2024geodesic}, tracking on vascular images is done by lifting the image from $\R^2$ to the space of planar positions and orientations $\M$. It has the benefit of appropriately dealing with crossing and overlapping structures. 
Vessel tracking algorithms acting only on position space tend to take the wrong exit at crossing structures and one overcomes this \cite{berg2024geodesic,duits2018optimal,bekkers2015datadriven} by lifting the image data and the tracking to $\M$.  
As can be seen in Fig.~\ref{fig:orientation_score_draft},
crossing lines become disentangled in the lifted image data which enables correct geodesic tracking of the blood vessels.    
The 3D space $\M$ is defined as follows.
\begin{definition}[Space of planar positions and orientations]
    The space of planar positions and orientations $\M$ is defined as the smooth manifold 
    \begin{equation}\label{eq:def:M2}
        \M \coloneqq (T \Rtwo) \setminus \{0\} / \sim,
    \end{equation}
    where T and 0 denote the tangent bundle and the 0-section, respectively, and the equivalence relation $\sim$ is given, for $(\mathbf{x}_1, \dot{\mathbf{x}}_1), (\mathbf{x}_2, \dot{\mathbf{x}}_2) \in T \Rtwo \setminus \{0\}$, by
    \begin{equation*}
        (\mathbf{x}_1, \dot{\mathbf{x}}_1) \sim (\mathbf{x}_2, \dot{\mathbf{x}}_2) \iff \mathbf{x}_{1} = \mathbf{x}_2, \textrm{ and } \exists \lambda > 0: \dot{\mathbf{x}}_1 = \lambda \dot{\mathbf{x}}_2.
    \end{equation*}
    A roto-translation $(\mathbf{b}, R)$ in the special Euclidean group $\SE(2) \coloneqq \Rtwo \rtimes \SO(2)$ acts on $\mathbf{p} = (\mathbf{x}, \dot{\mathbf{x}}) \in \M$ by 
    group action
    \begin{equation}\label{eq:left_action_m2}
        L_{(\mathbf{b}, R)} (\mathbf{x}, \dot{\mathbf{x}}) \coloneqq (\mathbf{b} + R \mathbf{x}, R_* \dot{\mathbf{x}}), \textrm{ with push-forward } R_*.
    \end{equation}
    Upon choosing a reference point $\mathbf{p_0} \coloneqq ((0, 0), (1, 0)) \in \M$ we find
    \begin{equation}\label{eq:quotients}
        \M \equiv \SE(2) / \Stab_{\SE(2)}(\mathbf{p_0}) \equiv \SE(2) / \{(\mathbf{0}, I)\} \equiv \SE(2),
    \end{equation}
    from which we conclude that $\M$ is the principal homogeneous space of $\SE(2)$.
\end{definition}
It can be shown that $\M \equiv \Rtwo \times S^1$. Since $S^1 \equiv \R/(2\pi\mathbb{Z}) \equiv \SO(2)$, we identify
\begin{equation}\label{eq:identificationS1SO2}
(\cos(\theta), \sin(\theta)) \longleftrightarrow \theta \longleftrightarrow R_\theta\in \SO(2),
\end{equation}
where $R_\theta$ is a counter-clockwise rotation by $\theta$.
The above Lie group quotient identifications mean that points in $\M$ can 
be parametrised as $(\mathbf{x}, (\cos(\theta), \sin(\theta)))$, $(\mathbf{x}, \theta)$, or $(\mathbf{x}, R_\theta)$ for $\mathbf{x} \in \Rtwo$, $\theta \in \R/(2\pi\mathbb{Z})$.

On a principal homogeneous space like $\M$, it makes sense to perform processing in a \emph{left-invariant frame}. We choose the frame
\begin{equation}\label{eq:left_invariant_frame_m2}
\A_1|_\mathbf{p} \coloneqq (L_{g_\mathbf{p}})_* \partial_x|_\mathbf{p_0}, \; \A_2|_\mathbf{p} \coloneqq (L_{g_\mathbf{p}})_* \partial_y|_\mathbf{p_0}, \textrm{ and } \A_3|_\mathbf{p} \coloneqq (L_{g_\mathbf{p}})_* \partial_\theta|_\mathbf{p_0},
\end{equation}
where $(L_{g_\mathbf{p}})_*$ is the push-forward of left multiplication \eqref{eq:left_action_m2} with $L_{g_\mathbf{p}}$, and $g_\mathbf{p} = (\mathbf{x}, R_\theta)$ for $\mathbf{p} = (\mathbf{x}, \theta)$. Direct computation yields the (left-invariant) vector fields
$\A_1 = \cos(\theta) \partial_x + \sin(\theta) \partial_y$, $\A_{2} = -\sin (\theta) \partial_x + \cos (\theta) \partial_y$, and $\A_3 = \partial_\theta$.

We can lift an image from $\Rtwo$ to $\M$ with the orientation score transform:
\begin{definition}[Lifting the Data: Orientation Score]\label{def:orientation_score}
    The orientation score transform $W_\psi: \mathbb{L}_2(\Rtwo) \to \mathbb{L}_2(\M)$, using anisotropic wavelet $\psi$, maps an image $f \in \mathbb{L}_2(\Rtwo)$ to an orientation score $U=W_\psi f \in \mathbb{L}_2(\M)$ and is given by
    \begin{align*}
        (W_\psi f)(\mathbf{x},\theta) \coloneqq \int_{\Rtwo} \overline{\psi(R_\theta^{-1}(\mathbf{y} - \mathbf{x}))} f(\mathbf{y}) \mathrm{d}\mathbf{y}.
    \end{align*}
\end{definition}
We lift with cake wavelets $\psi$ \cite{franken2008horizontality} (cf. Figure \ref{fig:cakewavelet_draft}) as they do not tamper with the data evidence and allow for fast reconstruction of the original image $f$ by integration over $\theta$. The lifting disentangles crossing structures, see Figure~\ref{fig:orientation_score_draft}.
\begin{figure}
\centering
\begin{subfigure}[t]{0.2\textwidth}
\centering
\includegraphics[width=\textwidth]{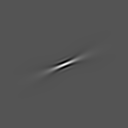}
\caption{cake wavelet}\label{fig:cakewavelet_draft}
\end{subfigure}
\hfill
\begin{subfigure}[t]{0.31\textwidth}
\begin{tikzpicture}
\draw(0, 0)node[inner sep=0]{\includegraphics[width=\textwidth]{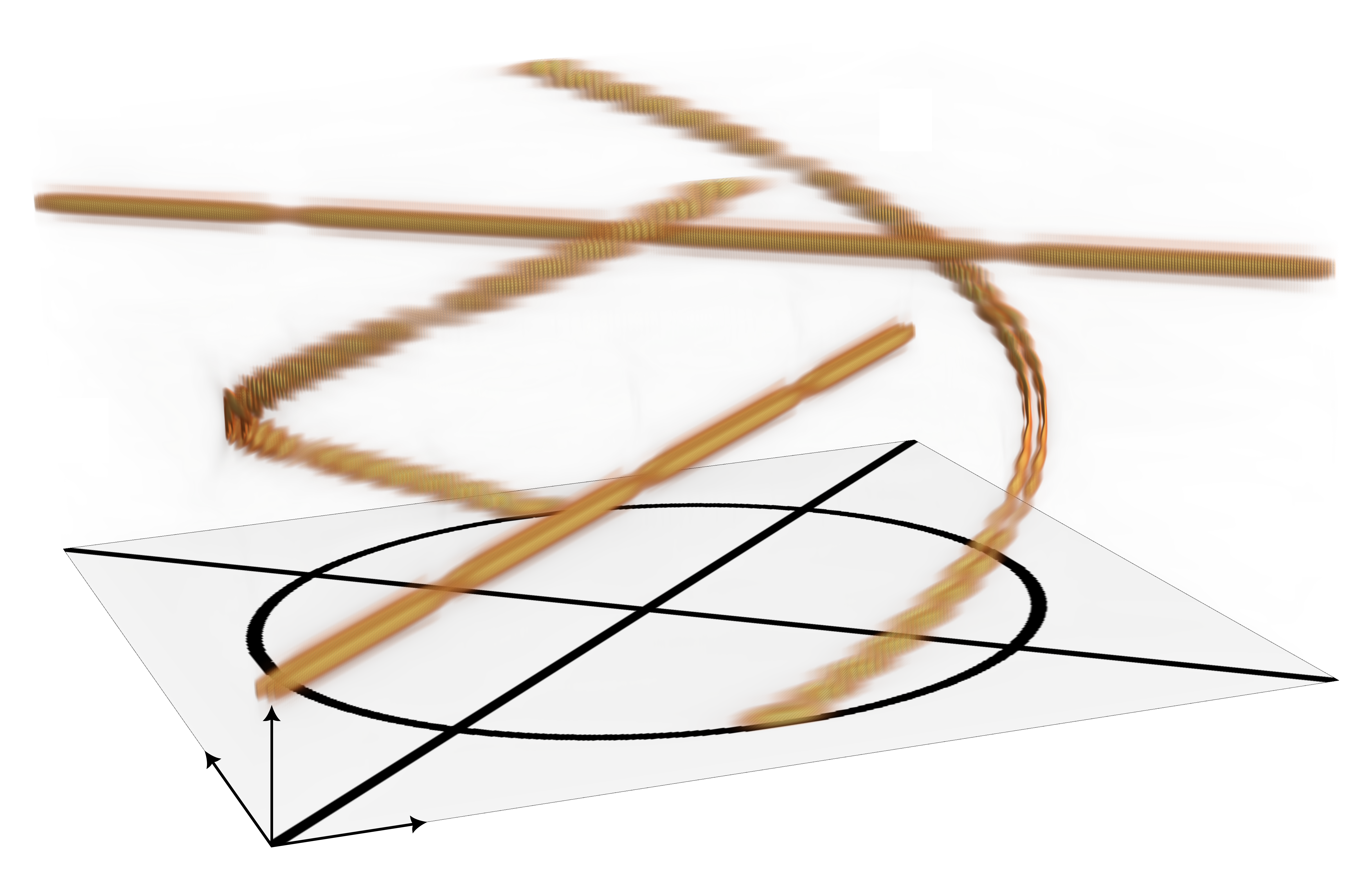}};
\draw(-0.6, -1.05)node{\contour{white}{$x$}};
\draw(-1.39, -0.8)node{\contour{white}{$y$}};
\draw(-1.15, -0.6)node{\contour{white}{$\theta$}};
\end{tikzpicture}
\caption{orientation score}\label{fig:orientation_score_draft}
\end{subfigure}
\hfill
\begin{subfigure}[t]{0.23\textwidth}
\begin{tikzpicture}
\draw(0, 0)node[inner sep=0]{\includegraphics[width=\textwidth]{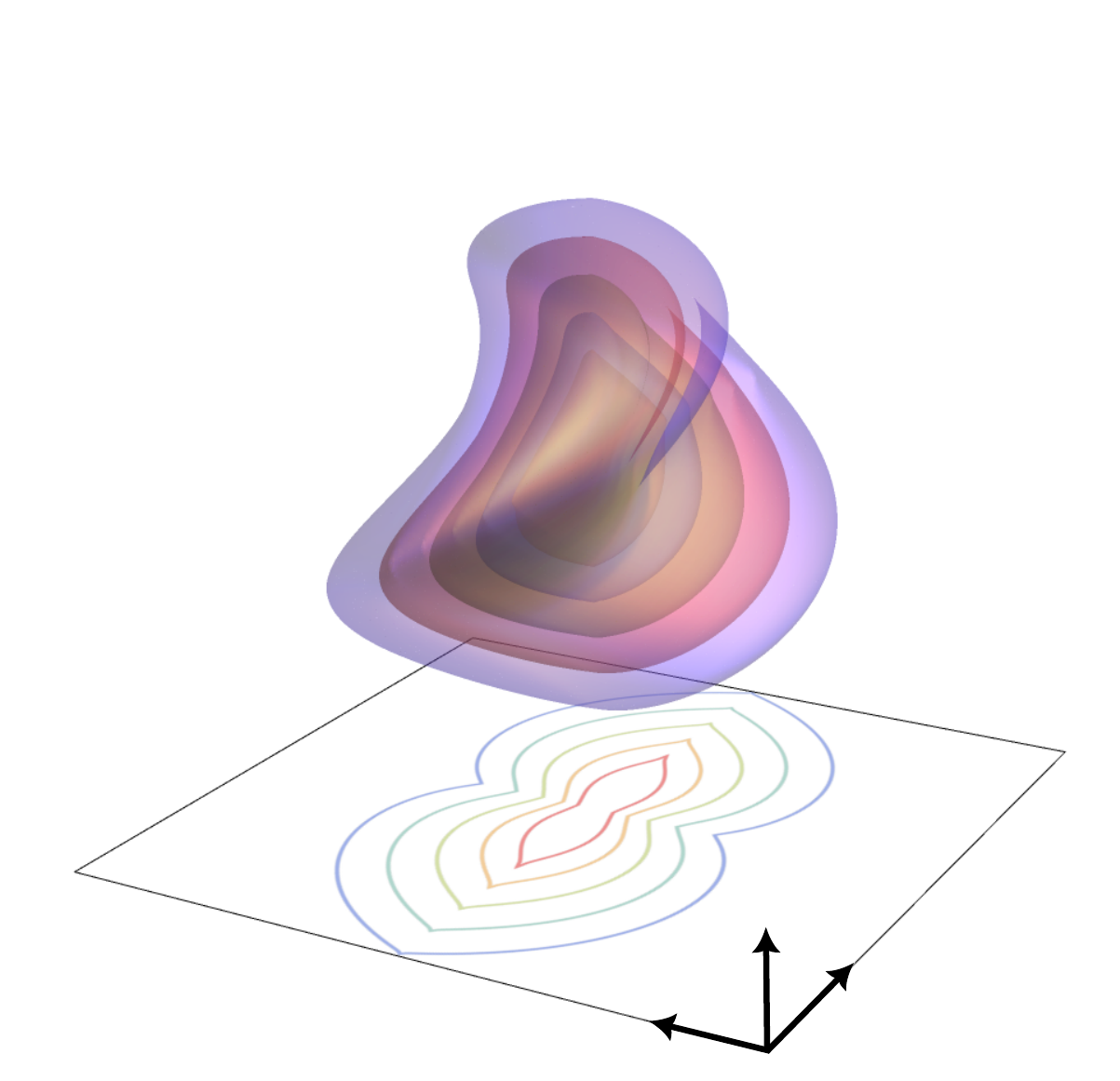}};
\draw(0.76, -0.99)node{\contour{white}{$x$}};
\draw(0.16, -1.19)node{\contour{white}{$y$}};
\draw(0.51, -0.83)node{\contour{white}{$\theta$}};
\end{tikzpicture}
\caption{sub-Riemannian}\label{fig:subriemannian_ball_se2_draft}
\end{subfigure}
\hfill
\begin{subfigure}[t]{0.23\textwidth}
\begin{tikzpicture}
\draw(0, 0)node[inner sep=0]{\includegraphics[width=\textwidth]{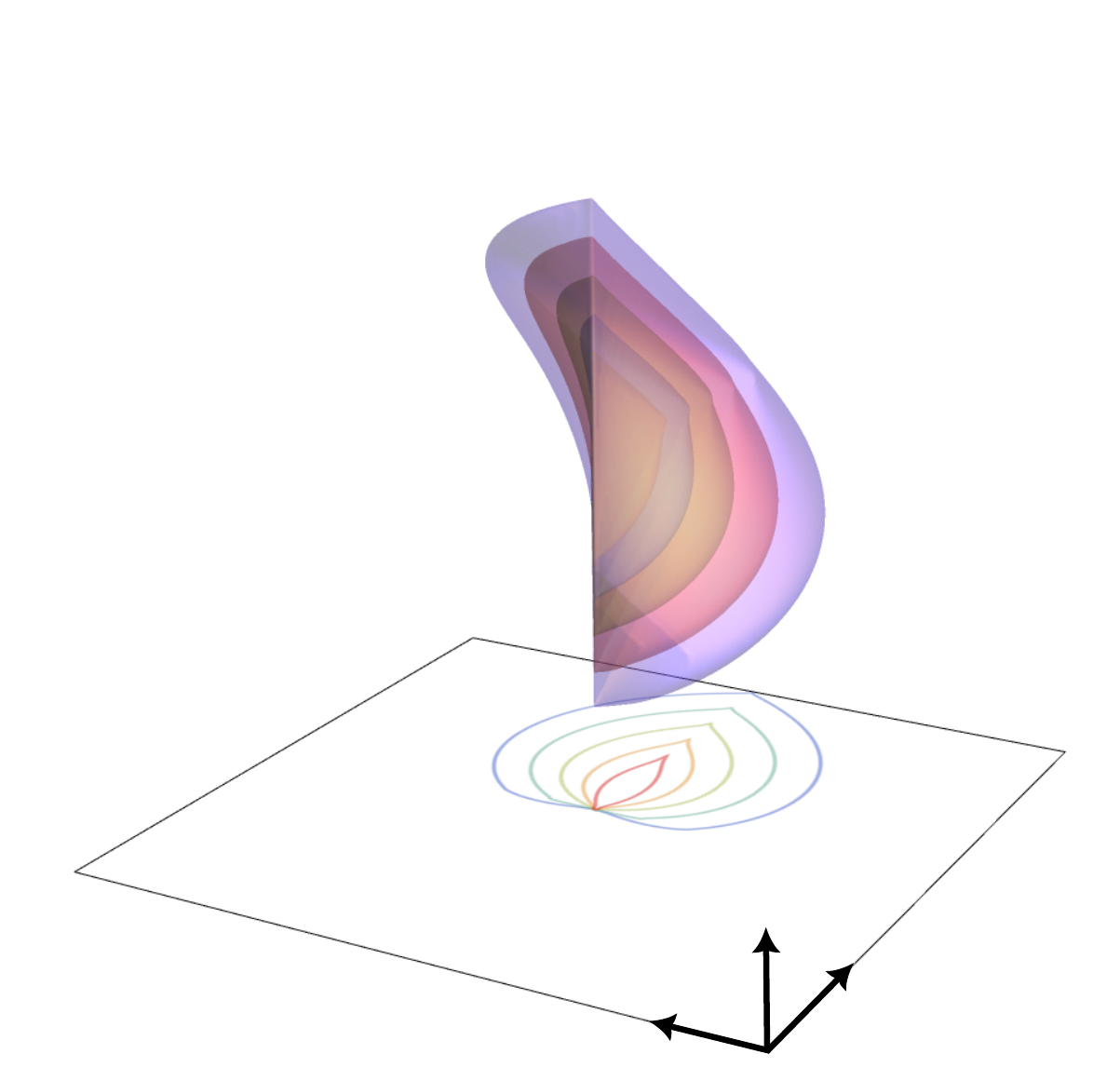}};
\draw(0.76, -0.99)node{\contour{white}{$x$}};
\draw(0.16, -1.19)node{\contour{white}{$y$}};
\draw(0.51, -0.83)node{\contour{white}{$\theta$}};
\end{tikzpicture}
\caption{forward gear}\label{fig:finslerian_ball_se2_draft}
\end{subfigure}
\caption{Left to right: Cakewavelet with orientation $\theta = \pi / 8$; Orientation score transform. The crossing structures in the image get disentangled by the lifting; Isosurfaces of sub-Riemannian and forward gear distances \eqref{eq:distance_m2} with $C \equiv 1$. Contours show the min-projection over orientations $\theta$.}
\label{fig:M2_balls_draft}
\end{figure}
Vector field $\A_1$ now points along the local orientation in orientation scores constructed using cake wavelets. This brings us to the concept of horizontality.
\begin{definition}[Horizontality on $\M$]\label{def:horizontality_m2}
On $\M$, we choose the \emph{distribution} $\Delta^{\M} \coloneqq \Span\{\A_1, \A_3\} \subset T \M$. Then a smooth curve $\gamma: \R \to \M$ is said to be \emph{horizontal} if $\dot{\gamma}(t) \in \Delta_{\gamma(t)}^{\M}$ for all $t$.
\end{definition}
By construction, the orientation score transform with cake wavelets lifts curves on $\Rtwo$ to horizontal curves on $\M$ \cite[Sec.~2.8.5]{franken2008horizontality}. If we define $\{\omega^i\}_{i = 1}^3$ to be the dual frame of $\{\A_i\}_{i = 1}^3$, i.e. $
\omega^i(\mathcal{A}_{j}) = \delta^{i}_{j}$, then the tensor fields $\omega^1 \otimes \omega^1$, $\omega^2 \otimes \omega^2$, and $\omega^3 \otimes \omega^3$ measure forward, lateral, and angular movement, respectively, letting us construct the following Reeds-Shepp car models \cite{reeds1990optimal,duits2018optimal}:
\begin{equation}\label{eq:subriemannian_m2}
\mathcal{G}_\mathbf{p} = 
\begin{cases}
    C^2(\mathbf{p}) \left(\xi_{\M}^2 \omega^1 \otimes \omega^1 + \omega^3 \otimes \omega^3\right)|_\mathbf{p}, & \textrm{on } \Delta^{\M}_\mathbf{p} \times \Delta^{\M}_\mathbf{p}, \\
    +\infty, & \textrm{else, and}
\end{cases}
\end{equation}
\begin{equation}\label{eq:finsler_m2}
\left|\mathcal{F}(\mathbf{p}, \cdot)\right|^2 =
\begin{cases}
    C^2(\mathbf{p}) \left(\xi_{\M}^2 \omega^1 \otimes \omega^1 + \omega^3 \otimes \omega^3\right)|_\mathbf{p}, & \textrm{on } \Delta^{\M,+}_\mathbf{p}, \\
    +\infty, & \textrm{else},
\end{cases}
\end{equation}
where $C: \M \to \R_{> 0}$ is a positive cost function, $\xi_{\M}$ is the stiffness parameter regulating the relative cost of moving forwards compared to turning, and $\Delta^{\M,+}_\mathbf{p} \coloneqq \{\dot{\mathbf{p}} \in \Delta^{\M}_\mathbf{p} \mid \omega^1(\dot{\mathbf{p}}) \geq 0\}$. 
\begin{remark}\label{rem:explanation_cars}
One can understand the car models by considering Fig.~\ref{fig:Intro}. The car,
which only has a forward gear,
can just move forwards and turn the wheel. This corresponds to $\mathcal{F}$ in \eqref{eq:finsler_m2},
since curves with tangents in $\Delta^{\M,+}_\mathbf{p}$ only point forwards and angularly.
As such, we call $\mathcal{F}$ the \emph{forward gear} model. For the model $\mathcal{G}$ in \eqref{eq:subriemannian_m2}, the car can also move backwards. We call $\mathcal{G}$ the \emph{sub-Riemannian} model.
In both models, the cost function ensures the car remains on the vasculature.
\end{remark}
We can use these models to define the \emph{geodesic distances} (shown in Figures~\ref{fig:M2_balls_draft}c-d): 
\begin{equation}\label{eq:distance_m2} 
d_\mathcal{G}(\mathbf{p}, \mathbf{q}) = \inf_{\gamma \in \Gamma} \int_0^1\!\! \sqrt{\mathcal{G}_{\gamma(t)}(\dot{\gamma}(t), \dot{\gamma}(t))} \diff t, \ 
d_\mathcal{F}(\mathbf{p}, \mathbf{q}) = \inf_{\gamma \in \Gamma} \int_0^1 \!\! \mathcal{F}(\gamma(t), \dot{\gamma}(t)) \diff t,
\end{equation}
where $\Gamma \coloneqq \{\gamma: [0,1] \to \M \textrm{ piecewise } C^1 \mid \gamma(0) = \mathbf{p}, \gamma(1) = \mathbf{q}\}$;
the minimising curves are called \emph{geodesics}. For vascular tracking, we use the crossing-preserving vesselness $\mathcal{V}: \M \to \R_{> 0}$ from \cite{berg2024geodesic}. The cost function, with $\lambda, p > 0$, is given by
\begin{equation}\label{eq:cost_m2}
    C^{\M}(\mathbf{p}) \coloneqq \frac{1}{1 + \lambda\,\vert\mathcal{V}(\mathbf{p})\vert^p}.
\end{equation}

\section{Space of Spherical Positions \& Orientations\texorpdfstring{ $\tildeM$}{}}
The retina itself is spherical; in this work we therefore track vascular trees in the space of \emph{spherical} positions and orientations $\tildeM$, which is defined as follows
\begin{definition}[Space of spherical positions and orientations]
    The space of spherical positions and orientations $\tildeM$ is defined as the smooth manifold
    \begin{align}
        \tildeM \coloneqq (T S^2) \setminus \{0\} /\sim,
    \end{align}
    where T and 0 denote the tangent bundle and the 0-section respectively and the equivalence relation $\sim$ is given, for $(\mathbf{n}_1, \dot{\mathbf{n}}_1), (\mathbf{n}_2, \dot{\mathbf{n}}_2) \in T S^2 \setminus \{0\}$, by
    \begin{equation*}
        (\mathbf{n}_1, \dot{\mathbf{n}}_1) \sim (\mathbf{n}_2, \dot{\mathbf{n}}_2) \iff \mathbf{n}_{1} = \mathbf{n}_2, \textrm{ and } \exists \lambda > 0: \dot{\mathbf{n}}_1 = \lambda \dot{\mathbf{n}}_2.
    \end{equation*}
    A rotation $R$ in the three-dimensional special orthogonal group $\SO(3)$ acts on $(\mathbf{n}, \dot{\mathbf{n}}) \in \tildeM$ via the group action
    \begin{equation}\label{eq:left_action_w2}
        R . (\mathbf{n}, \dot{\mathbf{n}}) \coloneqq (R \mathbf{n}, R_* \dot{\mathbf{n}}), \textrm{ with push-forward } R_*.
    \end{equation}    
    Choosing a reference point $\mathbf{q_0} = ((1,0,0), (0,0,1)) = (\mathbf{n}_0, \dot{\mathbf{n}}_0) \in \tildeM$
    we find
    \begin{equation}\label{eq:quotients_w2}
    \tildeM \equiv \SO(3) / \Stab_{\SO(3)}(\mathbf{q_0}) \equiv \SO(3) / \{I\} \equiv \SO(3),
    \end{equation}
    from which we  conclude that $\tildeM$ is the principal homogeneous space of $\SO(3)$.
\end{definition}
\begin{figure}
    \centering
    \begin{subfigure}[b]{0.45\textwidth}
        \centering

\begin{tikzpicture}
    \draw [test={.5pt}{orange}{blue}, domain=120:240] plot ({1.48*cos(\x)}, {1.48*sin(\x)}) node at (-1,-1.7){retina}; 
    \draw[->] (-1.2,-1.55) arc[radius = 0.5, start angle = 180, end angle = 130];
    
    \draw[brown] (0,0) circle (1.5);
    \draw[brown] (0,1.5) node[above] {eye};

    \draw [test={.4pt}{orange}{blue}] (3.52,{-(1.5*sin(60)+(1.5*(1-cos(60))*(3*sin(60)/(2+3*cos(60))))}) -- (3.52,{1.5*sin(60)+(1.5*(1-cos(60))*(3*sin(60)/(2+3*cos(60)))}) node at (3.75,0)[opacity = 1, rotate = -90]{flat image};
    \draw [->] (3.75,-0.8) arc[radius = 0.2, start angle = 0, end angle = -90];
    \draw [->] (3.75,0.8) arc[radius = 0.2, start angle = 0, end angle = 90];
    
    \draw [test={.5pt}{orange}{blue}, domain=-60:60] plot ({2+1.48*cos(\x)}, {1.48*sin(\x)}) node at (2.1,-1.7)[opacity = 1, rotate = -0]{\small spherical image}; 
    \draw [->] (3.2,-1.6) arc[radius = 0.5, start angle = -60, end angle = 45];
    \draw[beaver] (2,0) circle (1.5);

    \draw[dotted] (-1.5,0) -- (3.5,0);
    \draw node at ({2+1.5*cos(60},0) {\textbullet} node at (3.1,-0.2) {$(c,0)$};
    \draw node at (1,0) {\textbullet} node at (0.5,-0.2) {$(-a,0)$};
    \draw[dotted] (2,-1.5) -- (2,1.5);
    \draw[dotted] (2,0) -- ({2+1.5*cos(60)},{0+1.5*sin(60)});
    \draw[dotted] (2,0) -- ({2+1.5*cos(60)},{0-1.5*sin(60)});
    \draw [->] (2.5,0.) arc[radius = 0.5, start angle = -0, end angle = 60] node at (2.6,0.3){$\psi$};

    \draw[domain=0:1] plot ({\x*1.5*cos(120)+(1-\x)*(2+1.5*cos(-60)}, {\x*(1.5*sin(120))+(1-\x)*1.5*sin(-60)});
    \draw[domain=0:1] plot ({\x*1.5*cos(240)+(1-\x)*(2+1.5*cos(60)}, {\x*(1.5*sin(240))+(1-\x)*1.5*sin(60)});
    \draw[dashed]  ({(2+1.5*cos(60)}, {1.5*sin(60)}) -- ({2+1.5},{1.5*sin(60)+(1.5*(1-cos(60))*(3*sin(60)/(2+3*cos(60)))});
    \draw[dashed]  ({(2+1.5*cos(-60)}, {1.5*sin(-60)}) -- ({2+1.5},{1.5*sin(-60)+(1.5*(1-cos(-60))*(3*sin(-60)/(2+3*cos(-60)))});
\end{tikzpicture}
        \caption{Schematic of medical setup: the curved retina is captured in a flat image, which we map to a curved image with the same geometry as the retina.}
        \label{fig:Intro:Setting}
    \end{subfigure}
    \hfill
    \begin{subfigure}[b]{0.45\textwidth}
        \centering
        \tikzmath{\x0 = 0; \y0 = 0; \x1 = 50; \y1 = -80; \t1 = -240; \r1 = 0.5; } 
\begin{tikzpicture}[tdplot_main_coords, scale = 2]

\coordinate (P) at ({1/sqrt(3)},{1/sqrt(3)},{1/sqrt(3)});

\shade[ball color = lightgray,
	opacity = 0.5
] (0,0,0) circle (1cm);

\tdplotsetrotatedcoords{0}{0}{0};
\draw[dashed,
	tdplot_rotated_coords,
	gray
] (0,0,0) circle (1);



\draw[-stealth,
        domain=0:320
] plot ({0.2*cos(\x)}, 1.1,{0.2*sin(\x)})
        node[below] {$\alpha$};

\draw[-stealth,
        domain=0:320
] plot ({0.2*cos(-\x)}, {0.2*sin(-\x)},1.1)
        node[left] {$\beta$};

\draw[-stealth,
        domain=90:400
] plot (1.3, {0.2*cos(\x)}, {0.2*sin(\x)})
        node at (1.3, {0.3*cos(350)}, {0.3*sin(350)}) {$\phi$};


\draw[-stealth] (0,0,0) -- (1.80,0,0) 
	node[below left] {$X$};

\draw[-stealth] (0,0,0) -- (0,1.30,0)
	node[below right] {$Y$};

\draw[-stealth] (0,0,0) -- (0,0,1.30)
	node[above] {$Z$};

\draw[gray] (0,0,0) -- (-1.80,0,0);

\draw[gray] (0,0,0) -- (0,-1.30,0);

\draw[gray] (0,0,0) -- (0,0,-1.30);

\draw[-stealth] (1,0,0) -- (1,0,0.3)
	node[above] {$\mathbf{q_0}$};
\draw node at ({cos(\x0)*cos(\y0)},{-cos(\x0)*sin(\y0)},{sin(\x0)}) {\textbullet} node at (1,0.15,0.1) {$\mathbf{n}_0$};

\draw[-stealth, line width=1mm, caribbeangreen] ({cos(\x1)*cos(\y1)},{-cos(\x1)*sin(\y1)},{sin(\x1)}) -- ({cos(\x1)*cos(\y1)+\r1*(-sin(\x1)*cos(\y1)*cos(\t1)-sin(\y1)*sin(\t1))},{-cos(\x1)*sin(\y1)+\r1*(sin(\x1)*sin(\y1)*cos(\t1)-cos(\y1)*sin(\t1))},{sin(\x1)+\r1*(cos(\x1)*cos(\t1))})
	node[below] {\textcolor{black}{$\mathbf{q}$}};
\draw node at ({cos(\x1)*cos(\y1)},{-cos(\x1)*sin(\y1)},{sin(\x1)}) {\textbullet};




\end{tikzpicture}

        \caption{Coordinates in $\SO(3) \equiv \tildeM$. 
        }
    \label{fig:so3_coordinates}
    \end{subfigure}
    \caption{Visualization of the eye and the captured spherical image, and the used coordinates $\{\alpha, \beta, \phi\}$ of the green Reeds-Shepp car $\mathbf{q} =
R(\alpha, \beta, \phi). \mathbf{q}_0$ with $\mathbf{q}_0=(\mathbf{n}_0,\dot{\mathbf{n}}_0) \in T(S^2)$ on the retina (recall Fig.~\ref{fig:Intro:SettingAndCoordinates}). Parameter $\psi$ denotes the max. angle of the wide-field image, $c>0$ is the distance from the eyeball centre to the flat image, $(-a,0)$ is the focal point. }
    \label{fig:Intro:SettingAndCoordinates}
\end{figure}
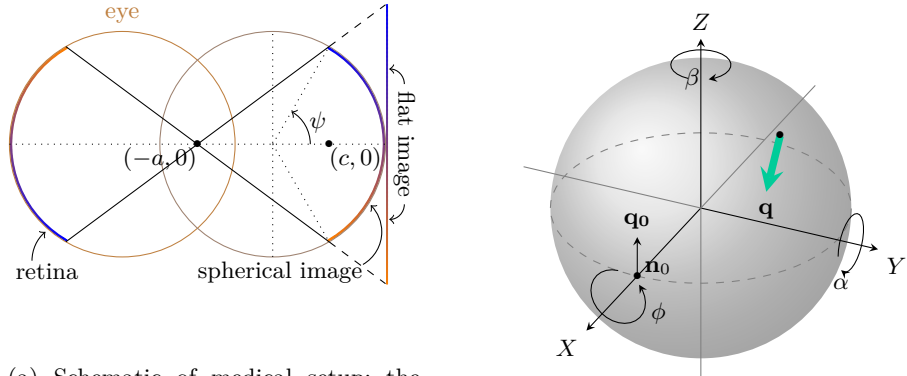
We use the following coordinates on $\SO(3)$ (see Figure \ref{fig:so3_coordinates}):
\begin{equation*}
R(\alpha, \beta, \phi) \longleftrightarrow (\alpha, \beta, \phi) \iff R = R_Z(-\beta) \after R_Y(-\alpha) \after R_X(\phi),
\end{equation*}
with $R_X(t), R_Y(t), R_Z(t)$ 
counter-clockwise rotations around the $X$-, $Y$-, and $Z$-axes by $t$ radians. 
Since $\tildeM \equiv \SO(3)$, we use the same coordinates thereon.

As with $\M$, we can get left-invariant vector fields on $\tildeM$ by pushing forward derivatives at the reference point $\mathbf{q_0}$ with the group action; we choose the frame
\begin{equation}\label{eq:left_invariant_frame_w2}
\B_1|_\mathbf{q} \coloneqq (R_\mathbf{q})_*\partial_\alpha|_\mathbf{q_0}, \B_2|_\mathbf{q} \coloneqq (R_\mathbf{q})_*\partial_\beta|_\mathbf{q_0}, \textrm{ and } \B_3|_\mathbf{q} \coloneqq (R_\mathbf{q})_*\partial_\phi|_\mathbf{q_0},
\end{equation}
where $R_*$ is the push-forward of left multiplication \eqref{eq:left_action_w2} with $R$, and $R_\mathbf{q} = R(\alpha, \beta, \phi)$ for $\mathbf{q} \in \tildeM$ with coordinates $(\alpha, \beta, \phi)$.

We want to perform crossing-preserving tracking on $\tildeM$ instead of $\M$. For this, we need to define $\tildeM$ analogues of the $\M$ curve optimizations in \eqref{eq:distance_m2}, which in turn requires a cost function $C$ on $\tildeM$. Since the tracking must be crossing-preserving, this cost must be computed from lifted data. We achieve this by mapping the crossing-preserving vesselness from $\M$ to $\tildeM$. Mashtakov et al. \cite[Sec.~1.1]{mashtakov2017tracking} already derived a map between $S^2$ and $\Rtwo$ corresponding to the experimental setup (Figure~\ref{fig:Intro:Setting}); we extend this map $\Pi$ (cf.~Figure~\ref{fig:Intro}) from $\tildeM$ to $\M$. In Theorem~\ref{thm:pi_map}, we prove that this extension is uniquely defined by requiring that horizontal curves in $\tildeM$ get mapped to horizontal curves in $\M$.
\begin{definition}[Horizontality on $\tildeM$]\label{def:horizontality_w2}
On $\tildeM$, we choose the \emph{distribution} $\Delta^{\tildeM} \coloneqq \Span\{\B_1, \B_3\} \subset T \tildeM$. Then a smooth curve $\delta: \R \to \tildeM$ is said to be \emph{horizontal} if $\dot{\delta}(t) \in \Delta_{\delta(t)}^{\tildeM}$ for all $t$.
\end{definition}
The choice of distribution $\Delta^{\tildeM} \coloneqq \Span\{\B_1, \B_3\} \subset T \tildeM$ is such that the intuitive relationship between the left-invariant frame and the orientation score carries over from $\M$ to $\tildeM$ (cf. $\Delta^{\M} \coloneqq \Span\{\A_1, \A_3\}$). 

In our coordinates, the transition map $\pi:D(\pi)\to \mathbb{R}^2$ of \cite{mashtakov2017tracking} is given by 
\begin{equation}\label{eq:pi_forward}
\pi
(\alpha, \beta) = \left(\frac{(a + c) \sin(\alpha)}{a + \cos(\alpha) \cos(\beta)}, \frac{(a + c) \cos(\alpha) \sin(\beta)}{a + \cos(\alpha) \cos(\beta)}\right),
\end{equation}
with domain $D(\pi) = \{R(\alpha, \beta, 0) \, \mathbf{n}_0 \in S^2 \mid \alpha, \beta \in (-\pi/2, \pi/2)\} \subset S^2$.

Here $a, c > 0$ are parameters of the experimental setup, see Figure \ref{fig:Intro:Setting}.
We can then uniquely extend $\pi$ to $\tildeM$ by requiring the preservation of horizontality:
\begin{theorem}\label{thm:pi_map}
Let $\Pi: D(\Pi) \to \M$, with $D(\Pi) \coloneqq D(\pi) \times S^1 $, such that a) $\pi$ is the spatial projection of $\Pi$,
and b) for any horizontal curve $\delta$ on $\tildeM$ the curve $\gamma \coloneqq \Pi \after \delta$ is horizontal on $\M$. Then, $\Pi$ is uniquely defined and is given by
\begin{equation}\label{eq:Pi_forward}
\Pi(\alpha, \beta, \phi) = (\pi(\alpha, \beta), \arg(\dot{\pi}^1(\alpha, \beta) + i \,\dot{\pi}^2(\alpha, \beta)))
\end{equation}
where, for $\pi^i$ the $i$-th component of $\pi$, we have
\begin{equation*}
\dot{\pi}^i(\alpha, \beta) 
\coloneqq \frac{\partial \pi^i}{\partial \alpha}(\alpha, \beta) \cos(\phi) + \frac{\partial \pi^i}{\partial \beta}(\alpha, \beta) \frac{\sin(\phi)}{\cos(\alpha)}.
\end{equation*}
\end{theorem}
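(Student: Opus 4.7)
The plan is to parametrize $\Pi$ as $\Pi(\alpha, \beta, \phi) = (\pi(\alpha, \beta), \theta(\alpha, \beta, \phi))$, so that condition (a) fixes the spatial component and the task reduces to pinning down the orientation function $\theta$ using the horizontality-preservation condition (b), applied against every horizontal curve $\delta$ on $\tildeM$.

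First I would compute the left-invariant frame $\B_1, \B_3$ in the Euler coordinates $(\alpha, \beta, \phi)$. Because the curve through $\mathbf{q_0}$ tangent to $\partial_\phi$ is $t \mapsto R_X(t) \mathbf{q_0}$ and $R_X$ sits rightmost in the decomposition $R = R_Z(-\beta)R_Y(-\alpha)R_X(\phi)$, its push-forward by $R_\mathbf{q}$ merely shifts $\phi$, giving $\B_3 = \partial_\phi$ globally. For $\B_1 = (R_\mathbf{q})_*\partial_\alpha|_{\mathbf{q_0}}$, I would differentiate $t \mapsto R_\mathbf{q} R_Y(-t)$ at $t = 0$ and re-express the resulting infinitesimal rotation in Euler form; after conjugating $R_X(\phi) R_Y(-t) R_X(-\phi)$ into a rotation about a tilted axis in the $(Y,Z)$-plane and absorbing it through $R_Y(-\alpha)$, the spatial projection of $\B_1$ onto $\partial_\alpha, \partial_\beta$ will read
\begin{equation*}
\cos(\phi)\,\partial_\alpha \;+\; \frac{\sin(\phi)}{\cos(\alpha)}\,\partial_\beta.
\end{equation*}
Any residual $\partial_\phi$-component of $\B_1$ is irrelevant for what follows, because $\pi$ is $\phi$-independent.

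Next, for an arbitrary horizontal $\delta$ with $\dot\delta = u\B_1 + v\B_3$, I would compute $\Pi_*\dot\delta$. Since $\B_3$ contributes nothing to the $(\partial_\alpha, \partial_\beta)$ components and $\pi$ does not depend on $\phi$, the spatial part of $\Pi_*\dot\delta$ is exactly $u(\dot{\pi}^1, \dot{\pi}^2)$, with $\dot{\pi}^i$ the expression in the theorem statement. Horizontality of $\gamma = \Pi \after \delta$ on $\M$ demands this spatial part lie in the line $\Span\{(\cos\theta,\sin\theta)\}$. Taking any $\delta$ with $u \neq 0$ forces $(\cos\theta,\sin\theta) \parallel (\dot{\pi}^1, \dot{\pi}^2)$, so
\begin{equation*}
\theta(\alpha, \beta, \phi) \equiv \arg(\dot{\pi}^1 + i\,\dot{\pi}^2) \pmod{\pi}.
\end{equation*}
Smoothness of $\Pi$ together with the natural normalisation $\Pi(\mathbf{q_0}) = \mathbf{p_0}$ — which one checks from $\dot{\pi}|_{(0,0,0)} = (\tfrac{a+c}{a+1}, 0)$, whose argument is $0$ — selects the branch claimed. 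A complementary check that $\Pi_*\B_3 = (\partial_\phi \theta)\,\partial_\theta$ automatically lies in $\Delta^{\M}$ closes the verification.

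The main obstacle I expect is the explicit coordinate computation of $\B_1$: writing $R_\mathbf{q} R_Y(-t)$ as $R_Z(-\beta(t))R_Y(-\alpha(t))R_X(\phi(t))$ to first order in $t$ requires the conjugation identity $R_X(\phi) R_Y(-t) R_X(-\phi) = R_{\cos(\phi) Y + \sin(\phi) Z}(-t)$ (with the appropriate sign) and careful Euler-decomposition of an infinitesimal rotation about a tilted axis — this is where the $1/\cos(\alpha)$ factor in the $\partial_\beta$-component arises. Once $\B_1$ is in hand, the push-forward formula, the parallelism argument, and the branch identification are routine.
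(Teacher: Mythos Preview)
Your approach is essentially the same as the paper's: both compute the coordinate expression of the horizontal distribution on $\tildeM$ (your spatial part of $\B_1$, namely $\cos\phi\,\partial_\alpha + \tfrac{\sin\phi}{\cos\alpha}\,\partial_\beta$, is exactly what the paper uses when it writes $\dot\delta = c^1\B_1 + c^3\B_3$ in coordinates), push the spatial part forward through $\pi$ via the chain rule, and then invoke horizontality on $\M$ to read off $\theta$ as the argument of the resulting planar velocity. The paper simply states the coordinate form of $\B_1$ without derivation, so your conjugation computation is extra work but lands in the same place.

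Two points where you are in fact more careful than the paper: (i) you correctly flag that $\B_1$ may carry a residual $\partial_\phi$-component (it does, namely $\tan\alpha\sin\phi\,\partial_\phi$) and that this is irrelevant because $\pi$ is $\phi$-independent --- the paper silently drops it; (ii) you observe that horizontality on $\M$ only pins down $\theta$ modulo $\pi$ (since the coefficient $c^1$ of $\A_1$ can have either sign), and you resolve the branch by continuity plus the normalisation $\Pi(\mathbf{q_0})=\mathbf{p_0}$. The paper writes ``$\angle(\dot x,\dot y)=\theta$'' as an exact equality without addressing this, so your treatment actually closes a small gap the paper leaves open --- though note that the normalisation you invoke is not among the stated hypotheses, so the uniqueness claim as literally phrased is slightly too strong in either account.
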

\begin{proof}
We start by finding sufficient and necessary conditions for curves to be horizontal. Suppose $(x, y, \theta) \equiv \gamma: \R \to \M$ is horizontal. Then, we have $\dot{\gamma}(t) \in \Span\{\mathcal{A}_1, \mathcal{A}_3\}$, which is true if and only if
\begin{align}
& \dot{\gamma}(t) = c^1(t) \mathcal{A}_1 + c^3(t) \mathcal{A}_3 = \underbrace{c^1(t) \cos(\theta(t)) \partial_x}_{\dot{x}(t)} + \underbrace{c^1(t) \sin(\theta(t)) \partial_y}_{\dot{y}(t)} + \underbrace{c^3(t) \partial_\theta}_{\dot{\theta}(t)} \nonumber \\
& \iff \angle (\dot{x}(t), \dot{y}(t)) = \theta(t). \label{eq:angle_m2}
\end{align}
Similarly, suppose $(\alpha, \beta, \phi) \equiv \delta: \R \to \tildeM$ is horizontal. Then, we have $\dot{\delta}(t) \in \Span\{\mathcal{B}_1, \mathcal{B}_3\}$, which is true if and only if
\begin{align}
& \dot{\delta}(t) = c^1(t) \mathcal{B}_1 + c^3(t) \mathcal{B}_3 = \underbrace{c^1(t) \cos(\phi(t)) \partial_\alpha}_{\dot{\alpha}(t)} + \underbrace{c^1(t) \frac{\sin(\phi(t))}{\cos(\alpha(t))} \partial_\beta}_{\dot{\beta}(t)} + \underbrace{c^3(t) \partial_\phi}_{\dot{\phi}(t)} \nonumber\\
& \iff \angle (\dot{\alpha}(t), \dot{\beta}(t) \cos(\alpha(t))) = \phi(t).\label{eq:angle_w2}
\end{align}
Now, let $(\alpha_0, \beta_0, \phi_0) \in D(\Pi)$, and let $(\alpha, \beta, \phi) \equiv \delta: \R \to \tildeM$ be a horizontal curve with $\delta(0) = (\alpha_0, \beta_0, \phi_0)$. Define $\gamma \coloneqq \Pi \after \delta \equiv (x, y, \theta)$. Then, by a) we have that $(x(t), y(t)) = \pi(\alpha(t), \beta(t))$, so in particular $(x_0, y_0) = \pi(\alpha_0, \beta_0)$. We next impose the horizontality constraint b):
\begin{equation*}
\begin{split}
\theta(t) & \overset{\eqref{eq:angle_m2}}{=} \angle (\dot{x}(t), \dot{y}(t)) = \arg(\dot{x}(t) + i \dot{y}(t)) \\
& \overset{\eqref{eq:angle_w2}}{=} \arg\left(\left(\frac{\partial \pi^1}{\partial\alpha}(\alpha(t), \beta(t))\cos(\phi(t)) + \frac{\partial \pi^1}{\partial\beta}(\alpha(t), \beta(t))\frac{\sin(\phi(t))}{\cos(\alpha(t))}\right) \right. \\
& + \left. i \left(\frac{\partial \pi^2}{\partial\alpha}(\alpha(t), \beta(t))\cos(\phi(t)) + \frac{\partial \pi^2}{\partial\beta}(\alpha(t), \beta(t))\frac{\sin(\phi(t))}{\cos(\alpha(t))}\right)\right).
\end{split}
\end{equation*}
We now simply evaluate at $t = 0$ to see $\theta_0 \coloneqq \theta(0) = \arg(\dot{\pi}^1(\alpha_0, \beta_0) + i \dot{\pi}^2(\alpha_0, \beta_0))$.
We have found that $\Pi(\alpha_0, \beta_0, \phi_0) = (x_0, y_0, \theta_0)$, which agrees with \eqref{eq:Pi_forward}.
\qed
\end{proof}
By choosing to map horizontal curves to horizontal curves, the intuition that $\A_1$
points parallel to the local orientation while $\A_2$ points laterally carries over onto $\tildeM$: $\B_1$ points parallel to the local orientation while $\B_2$ points laterally. Hence, we can find analogues of the $\M$ Reeds-Shepp car models \eqref{eq:subriemannian_m2} and \eqref{eq:finsler_m2} on $\tildeM$:
\begin{equation}\label{eq:subriemannian_w2}
\mathcal{G}_\mathbf{q} = 
\begin{cases}
    C^2(\mathbf{q}) \left(\xi_{\tildeM}^2 \nu^1 \otimes \nu^1 + \nu^3 \otimes \nu^3\right)|_\mathbf{q}, & \textrm{on } \Delta^{\tildeM}_\mathbf{q} \times \Delta^{\tildeM}_\mathbf{q}, \\
    +\infty, & \textrm{else, and}
\end{cases}
\end{equation}
\begin{equation}\label{eq:finsler_w2}
\left|\mathcal{F}(\mathbf{q}, \cdot)\right|^2 =
\begin{cases}
    C^2(\mathbf{q}) \left(\xi_{\tildeM}^2 \nu^1 \otimes \nu^1 + \nu^3 \otimes \nu^3\right)|_\mathbf{q}, & \textrm{on } \Delta^{\tildeM,+}_\mathbf{q}, \\
    +\infty, & \textrm{else},
\end{cases}
\end{equation}
where $\{\nu^i\}_{i = 1}^3$ is the dual frame to $\{\B_i\}_{i = 1}^3$, i.e. $\nu^i(\B_j) = \delta^i_j$, $C: \tildeM \to \R_{> 0}$ is a positive cost function, $\xi_{\tildeM}$ is the stiffness parameter regulating the relative cost of moving forwards compared to turning, and $\Delta^{\tildeM,+}_\mathbf{q} \coloneqq \{\dot{\mathbf{q}} \in \Delta^{\tildeM}_\mathbf{q} \mid \nu^1(\dot{\mathbf{q}}) \geq 0\}$. 
The explanation in Remark~\ref{rem:explanation_cars} carries over onto the sphere. As such, we again call $\mathcal{G}$ the \emph{sub-Riemannian} model and $\mathcal{F}$ the \emph{forward gear} model. These models induce geodesic distances analogous to \eqref{eq:distance_m2}.
For the cost function, we 
pull back the $\M$ cost function \eqref{eq:cost_m2} to $\W$: 
\begin{align}
    C^{\tildeM} \coloneqq C^{\M} \after \Pi.\label{eq:cost_w2}
\end{align}
\section{Eikonal PDE Solver}\label{sec:EikonalPDESolver}
We assume that vessels on the retina are well-modeled by geodesics (as e.g. \cite{bekkers2015datadriven,Liu2021tubular,mashtakov2017tracking}) of the distance maps induced by the car models \eqref{eq:subriemannian_m2}, \eqref{eq:finsler_m2}, \eqref{eq:subriemannian_w2}, and \eqref{eq:finsler_w2}; this allows us to find vessels by backtracking on these distance maps. We will now discuss how we compute the distance map and find corresponding geodesics.
Let $\mathcal{M} \in \{\M, \tildeM\}$, and let $\mathcal{F}$ be the (sub-)Finsler function corresponding to one of our controllers on $\mathcal{M}$.\footnote{The sub-Riemannian models also induce a Finsler function: $\mathcal{F}(\mathbf{p}, \dot{\mathbf{p}}) \coloneqq \sqrt{\mathcal{G}_\mathbf{p}(\dot{\mathbf{p}}, \dot{\mathbf{p}})}$.} Then, we are interested in computing the distance map with respect to some reference point $\mathbf{p}_0 \in \mathcal{M}$: $W(\mathbf{p}) \coloneqq d_\mathcal{F}(\mathbf{p}_0, \mathbf{p})$. The reference point $\mathbf{p}_0$ is called the \emph{seed} of the geodesic.
It was shown \cite{duits2018optimal} that $W$ is the (viscosity) solution of the eikonal PDE
\begin{equation}\label{eq:eikonal_pde}
\begin{cases}
\mathcal{F}^*(\mathbf{p}, \diff W(\mathbf{p})) = 1, \textrm{ on } \mathcal{M} \setminus \{\mathbf{p}_0\}, \\
W(\mathbf{p}_0) = 0,
\end{cases}
\end{equation}
where $\mathcal{F}^*$ is the dual Finsler function. 
Since \eqref{eq:eikonal_pde} is a boundary value problem, it is hard to solve. It can be solved efficiently with Fast Marching, e.g. Mirebeau \cite{mirebeau2019fastmarching}, though for technical reasons this requires sub-Riemannian and sub-Finslerian metrics to be approximated by highly anisotropic metrics. To be able to enforce exact sub-Riemannian and sub-Finslerian constraints, we follow Bekkers et al. \cite{bekkers2015datadriven} in approximating $W$ to accuracy $\epsilon > 0$ (in supremum norm) by iteratively solving a relaxed version of the eikonal PDE \eqref{eq:eikonal_pde}:
\begin{equation}\label{eq:iterative_pde}
\begin{cases}
\frac{\partial}{\partial r} W_{n + 1}^\epsilon(\mathbf{p}, r) = 1 - \mathcal{F}^*(\mathbf{p}, \diff W_{n + 1}^\epsilon(\mathbf{p}, r)), & \textrm{ on } \mathcal{M} \times [0, \epsilon], \\
W_{n + 1}^\epsilon(\mathbf{p}, 0) = W_n^\epsilon(\mathbf{p}, \epsilon), & \textrm{ on } \mathcal{M} \setminus \{\mathbf{p}_0\}, \\
W_{n + 1}^\epsilon(\mathbf{p}_0, r) = 0, & \textrm{ on } [0, \epsilon],
\end{cases}
\end{equation}
with as initial condition
\begin{equation}\label{eq:iterative_pde_initial_condition}
W_0^\epsilon(\mathbf{p}, 0) = \delta_{\mathbf{p}_0}^\mathcal{M}(\mathbf{p}) = \begin{cases}
    0, & \textrm{ if } \mathbf{p} = \mathbf{p}_0, \\
    + \infty, & \textrm{ else},
\end{cases}
\end{equation}
the morphological delta. 
In the left-invariant ($C \propto 1$), 
sub-Riemannian case on $\SE(2) \equiv \M$ it was shown for all $\mathbf{p} \in \SE(2)$ \cite[Thm.~E.3]{bekkers2015datadriven} that
\begin{equation}\label{eq:iterative_pde_convergence}
W(\mathbf{p}) = \lim_{\epsilon \downarrow 0} \lim_{n \to \infty} W_n^\epsilon(\mathbf{p}, 0).
\end{equation}
The proof of convergence, which relies on $\SE(2)$ morphological convolutions, can be readily extended to the left-invariant, sub-Riemannian case on $\tildeM \equiv \SO(3)$, by instead using $\SO(3)$ morphological convolutions. We would \emph{conjecture} that the distance map $W$ solves the eikonal PDE \eqref{eq:eikonal_pde} with any of the data-driven Finslerian car models considered in this work (and indeed on more general Finsler manifolds), and that the convergence result \eqref{eq:iterative_pde_convergence} extends to these situations; we hope to prove this in future work. 
Given (an approximation of) the distance map $W$, we compute the geodesic connecting point $\mathbf{p}$ to $\mathbf{p}_0$ by backtracking, i.e. by solving \cite[Sec.~2.5]{duits2018optimal}
\begin{equation}\label{eq:backtracking}
\begin{cases}
\dot{\gamma}(t) = -W(\mathbf{p})\, \diff_{\hat{\mathbf{p}}} \mathcal{F}^*(\gamma(t), \diff W(\gamma(t))), & \textrm{on } [0, 1], \\
\gamma(0) = \mathbf{p}, \gamma(1) = \mathbf{p}_0, &
\end{cases}
\end{equation}
where $\diff_{\hat{\mathbf{p}}} \mathcal{F}^*$ denotes the differential with respect to the second entry of $\mathcal{F}^*$. 

\section{Experimental Results}
We apply the new forward gear model on $\tildeM$ to track vessels on a wide-field image \cite{Maastrichtstudy} (max angle $2 \psi = 120^{\circ}$ cf. Fig.~\ref{fig:Intro:Setting}) of a retina (patch in Fig.~\ref{fig:UnderlyingImage}), using the cost function $C^{\tildeM}$ based on the crossing-preserving vesselness from \cite{berg2024geodesic}. We compare with two baselines: 1) tracking using the forward gear on $\M$ and the same crossing-preserving cost function, as in \cite{duits2018optimal}, to investigate the influence of the underlying manifold; 2) tracking using the forward gear on $\tildeM$ and an $\Rtwo$ cost function based on the Frangi vesselness \cite{Frangi2000Multiscale}, as in \cite{mashtakov2017tracking}, to investigate the influence of the crossing-preserving nature of the cost function. 
The implementations of the experiments (with parameters) can be found at \url{https://github.com/finnsherry/IterativeEikonal}.

We set one seed $\mathbf{p}_0$ near the optic disk. We use the eikonal PDE solver \eqref{eq:iterative_pde} discussed in Sec.~\ref{sec:EikonalPDESolver} to compute an estimate of the distance maps induced by our forward gear models (\eqref{eq:finsler_m2} and \eqref{eq:finsler_w2} on $\M$ and $\tildeM$, respectively). Afterwards, we calculate the geodesics by backtracking on the distance map, see \eqref{eq:backtracking}. 

In the experiments, we use 3 different cost functions: $C^{\M}$, $C^{\tildeM}$, and $C^{\tildeM,\Rtwo}$. The first cost function $C^{\M}$ is computed from the crossing-preserving vesselness $\mathcal{V}$ (see \cite{berg2024geodesic}) using \eqref{eq:cost_m2}. The second cost function $C^{\tildeM}$ is constructed from $C^{\M}$ using \eqref{eq:cost_w2} (we set $a = \frac{13}{21}$, $c = \frac{1}{2}$).
The final cost function $C^{\tildeM,\Rtwo}$ is based on the Frangi multiscale vessel enhancement filter $C^{\Rtwo}$ \cite{Frangi2000Multiscale}, as used in \cite{mashtakov2017tracking}: $C^{\tildeM,\Rtwo}(\alpha, \beta, \phi) \coloneqq C^{\Rtwo}(\pi(\alpha, \beta))$

Computing the distance maps and the geodesics on $\M$ and $\W$ is equally expensive, while pulling back the cost function from $\M$ to $\W$ requires negligible computational effort. 

\subsubsection{Baseline 1)}
We first compare the tracking results using the forward gear on $\M$ \eqref{eq:finsler_m2} with cost function $C^{\M} $ (Fig.~\ref{fig:TrackingSE2_CostSE2}) to the forward gear on $\W$ \eqref{eq:finsler_w2} with cost function $C^{\tildeM}$ (Fig.~\ref{fig:TrackingSO3_CostSO3}). 
The methods perform similarly, with neither taking ``shortcuts'' that are not present in the underlying vasculature, i.e. they do not jump between different vascular trees at crossings \cite{bekkers2015datadriven}. This is in line with \cite{mashtakov2017tracking}: since the cost function parameter $\lambda$ is large, it dominates the influence of the shape of the underlying manifold. In the white box, we see that the $\tildeM$ model follows the vasculature better than the $\M$ model.

\subsubsection{Baseline 2)}
We next compare the tracking results using the forward gear on $\W$ \eqref{eq:finsler_w2} with the cost function $C^{\tildeM,\Rtwo}$ (Fig.~\ref{fig:TrackingSO3_CostR2}) to the cost function $C^{\tildeM}$ (Fig.~\ref{fig:TrackingSO3_CostSO3}). We see that the model using $C^{\tildeM,\Rtwo}$ -- based on the Frangi filter which does not preserve crossings -- has some issues following the right vasculature. This is caused by the fact that the cost function does not distinguish between bifurcations and crossings: at bifurcations, vessels can quickly turn, whereas at crossings, turning should be prohibited.
Consequently, the model occasionally mistakes crossings for bifurcations, taking a ``shortcut'' (see bold dark green tract). On the other hand, the geodesics using $C^{\tildeM}$ better follow the vasculature, as can be seen in Fig.~\ref{fig:TrackingSO3_CostSO3}; the underlying crossing-preserving cost function has disentangled crossings, allowing the model to differentiate crossings from bifurcations and making it expensive to take a ``shortcut''.

\begin{figure}
    \centering
    \begin{subfigure}[t]{0.45\textwidth}
        \centering
        \begin{tikzpicture}[scale=1]
            \node[] (pic) at (0,0) {\includegraphics[width=\textwidth,trim={{0.3\textwidth} {1.1\textwidth} {0.7\textwidth} {0.5\textwidth}}, clip]{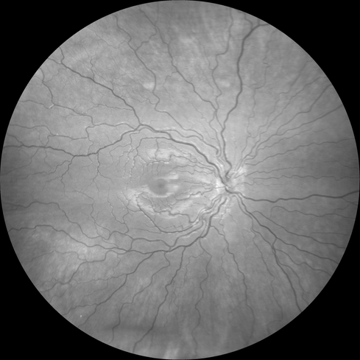}};
        \end{tikzpicture}
        \caption{Underlying image.}
        \label{fig:UnderlyingImage}
    \end{subfigure}
    \begin{subfigure}[t]{0.45\textwidth}
        \centering
        \begin{tikzpicture}[scale=1]
            \node[] (pic) at (0,0) {\includegraphics[width=\textwidth,trim={{0.3\textwidth} {1.1\textwidth} {0.7\textwidth} {0.5\textwidth}}, clip]{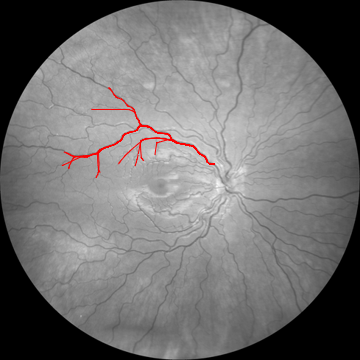}};
            \draw[white,thick] (-1,-1) rectangle (1,0);
        \end{tikzpicture}
        \caption{Tracking in $\M$ with cost function $C^{\M}$ and $\xi_{\M}=4$.}
        \label{fig:TrackingSE2_CostSE2}
    \end{subfigure}
    \begin{subfigure}[t]{0.45\textwidth}
        \centering
        \begin{tikzpicture}[scale=1]
            \node[] (pic) at (0,0) {\includegraphics[width=\textwidth,trim={{0.3\textwidth} {1.1\textwidth} {0.7\textwidth} {0.5\textwidth}}, clip]{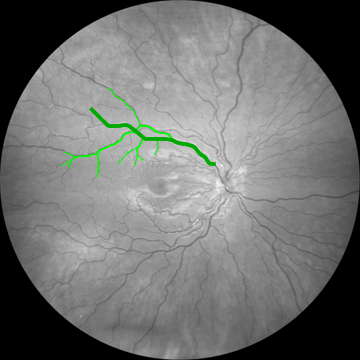}};
        \end{tikzpicture}
        \caption{Tracking in $\tildeM$ with cost function $C^{\tildeM,\Rtwo}$ and $\xi_{\tildeM}=6$.}
        \label{fig:TrackingSO3_CostR2}
    \end{subfigure}
    \begin{subfigure}[t]{0.45\textwidth}
        \centering
        \begin{tikzpicture}[scale=1]
            \node[] (pic) at (0,0) {\includegraphics[width=\textwidth,trim={{0.3\textwidth} {1.1\textwidth} {0.7\textwidth} {0.5\textwidth}}, clip]{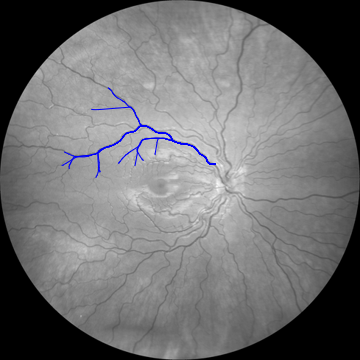}};
            \draw[white,thick] (-1,-1) rectangle (1,0);
        \end{tikzpicture}
        \caption{Tracking in $\tildeM$ with cost function $C^{\tildeM}$ and $\xi_{\tildeM}=6$.}
        \label{fig:TrackingSO3_CostSO3}
    \end{subfigure}
    \caption{Comparison tracking in $\M$ and $\tildeM$ with cost function in $\Rtwo$ vs $\M$. 
    The tracking results c), d) on $\tildeM$ perform better than 
    on $\M$ (white box) and using a lifted cost function d) avoids wrong exits c) at crossings (bold tract).}
    \label{fig:trackingInSE2vsSO3}
\end{figure}
\subsubsection{Conclusion.}
We have introduced a model on the lifted space of spherical positions and orientations $\tildeM$ that accounts for local angular information. We have extended the model introduced in \cite{mashtakov2017tracking} to deal with wide-field retinal images and to induce cusp-free geodesics. Additionally, we identified the coordinate mapping between $\tildeM$ and the space of planar positions and orientations $\M$. This mapping is constructed in such a way that horizontal curves in $\tildeM$ map to horizontal curves in $\M$, using the same spatial coordinate mapping introduced in \cite{mashtakov2017tracking}. The coordinate mapping allows us to pull back the cost function from $C^{\M}$ to $C^{\tildeM}$.

We have validated the effectiveness of the extension of the coordinate mapping in the experimental section. We conclude that tracking models in the space of spherical positions and orientations $\tildeM$ perform better when using a crossing-preserving cost function, which differentiates between structures at crossings of blood vessels. Additionally, we found that the tracking results (on wide-field images) in $\tildeM$ can improve upon to those calculated in the space of planar positions and orientations $\M$.



\subsubsection{Acknowledgements}
We gratefully acknowledge the Dutch Foundation of Science NWO for its financial support by Talent Programme VICI 2020 Exact Sciences (Duits, Geometric learning for Image Analysis, VI.C 202-031).

We thank the students of the modeling week project ``Geodesic Tracking of Blood Vessels in Wide-Field Retinal Images'' for preliminary experimental results.
%


%
%
%
%
\bibliographystyle{splncs04}
\bibliography{references}

\end{document}